\documentclass[a4]{amsart}
\usepackage{amssymb}
\usepackage{amscd}
\usepackage{verbatim,ifthen}
\usepackage{graphicx}
\usepackage{xypic}
\usepackage{cite}
\usepackage{amsthm}

\usepackage{latexsym}

\newtheorem*{lemma*}{Lemma}


\newtheorem{thm}{Theorem}

\newtheorem{lemma}[thm]{Lemma}

\theoremstyle{definition}
\newtheorem{defn}[thm]{Definition}


\addtolength{\textwidth}{80pt}
\addtolength{\hoffset}{-40pt}

\usepackage{hyperref}

\begin{document}

\title[Optimal Holomorphic Functional Calculus for the Ornstein-Uhlenbeck operator]{Optimal angle of the holomorphic functional calculus for the Ornstein-Uhlenbeck operator} 

\author{Sean Harris}
\address{Hanna Neumann Building \#145, Science Road
The Australian National University
Canberra ACT 2601.}
\email{Sean.Harris@anu.edu.au}     

\date{\today}

\thanks{The author gratefully acknowledges financial support by the discovery Grant DP160100941 of the Australian Research Council. This research is also supported by an Australian Government Research Training Program (RTP) Scholarship.}

 \begin{abstract}
We give a simple proof of the fact that the classical Ornstein-Uhlenbeck operator $L$  is R-sectorial of angle $\arcsin|1-2/p|$ on $L^{p}(\mathbb{R}^{d},\mu)$ for $1<p<\infty$, where $\mu$ is the standard Gaussian measure with density $d\mu = (2\pi)^{-\frac{d}{2}}\exp(-|x|^2/2)dx$.
Applying the abstract holomorphic functional calculus theory of Kalton and Weis, this immediately gives a new proof of the fact that $L$ has a bounded $H^{\infty}$ functional calculus with this optimal angle.
\end{abstract}

\subjclass{Primary: 47A60; Secondary: 35K08, 47F05}
\keywords{Ornstein-Uhlenbeck operator, Mehler kernel, Gaussian harmonic analysis, Holomorphic functional calculus, R-sectorial.}
 
 \maketitle

\section{Introduction}
The Ornstein-Uhlenbeck operator appears in many areas of mathematics: as the number operator of quantum field theory, the analogue of the Laplacian in the Malliavin calculus, the generator of the transition semigroup associated with the simplest mean-reverting stochastic process (the Ornstein-Uhlenbeck process), or as the operator associated with the classical Dirichlet form on $\mathbb{R}^d$ equipped with the Gaussian measure $d\mu = (2\pi)^{-\frac{d}{2}} e^{-|x|^2/2}dx$. For the sake of this paper, the Ornstein-Uhlenbeck operator will be defined via the Ornstein-Uhlenbeck semigroup $\{T_t\}_{t>0}$ whose action on $f \in L^p(\mu)$ is
\[T_tf(x) = \int_{\mathbb{R}^d} M_t(x,y)f(y)dy, \text{ for }x \in \mathbb{R}^d\]
where $M_t:\mathbb{R}^{2d} \to \mathbb{R}$ is given by 
\begin{equation}\label{defnOU}
(x,y) \mapsto \frac{1}{(2\pi)^\frac{d}{2}}\left(\frac{1}{1-e^{-2t}}\right)^\frac{d}{2}\exp\left( -\frac{1}{2}\frac{|e^{-t}x - y|^2}{(1-e^{-2t})} \right),
\end{equation}
the Mehler kernel.

Let us recall the basic properties of the Ornstein-Uhlenbeck semigroup used in this article. For each $p \in [1, \infty]$ and each $t>0$, the map $f \mapsto T_tf$ is bounded $L^p(\mu) \to L^p(\mu)$, with operator norm at most $1$, and is a positive operator. For $p \in [1, \infty)$, $T_t:L^p(\mu) \to L^p(\mu)$ is a $C_0$ semigroup, i.e. as $t \to 0$, $T_t \to I$ strongly and $T_t T_s = T_{t+s}$ for all $t, s>0$. For a proof of these preliminary facts, see for example Theorem 2.5 of \cite{Urbina}. It should be noted that although the Ornstein-Uhlenbeck semigroup arises in many different areas of mathematics, these basic properties can be proven solely with use of the explicit kernel and elementary techniques. It is a simple calculation to show that $T_t$ is bounded with norm $1$ on both $L^\infty(\mu)$ and $L^1(\mu)$, from which interpolation can be used to deduce boundedness with norm $1$ on $L^p(\mu)$ for $p\in [1, \infty]$. Positivity follows from non-negativity of the Mehler kernel. Strong continuity of the semigroup follows as in typical proofs of the strong continuity of the classical heat semigroup, and the semigroup property follows from a somewhat tedious exercise in integrating Gaussian functions. It should be noted that by using other representations of the Ornstein-Uhlenbeck semigroup, such as a spectral multiplier for the multivariate Hermite ONB of $L^2(\mu)$ or through a different representation via an integral kernel, one may prove some of these results even more simply, however the difficulty then becomes showing that all these representations for the Ornstein-Uhlenbeck semigroup are equivalent (for example, see \cite{Nualart}).
We consider the generator of the Ornstein-Uhlenbeck semigroup on $L^p(\mu)$, $p \in [1, \infty)$, whose negative we shall call the Ornstein-Uhlenbeck operator and denote by $L$. This operator is a closed densely-defined unbounded operator on $L^p(\mu)$, $p \in [1, \infty)$, which uniquely determines $T_t$. Thus from here on, we will use the notation $\exp(-tL)$ for the operator $T_t$, on any of these spaces.

This paper presents a new proof of the following theorem.
\begin{thm}\label{thmGoodHinfty}
For $p \in (1, \infty)$, the Ornstein-Uhlenbeck operator has a bounded $H^\infty(\Sigma_{\theta_p})$ functional calculus on $L^p(\mu)$, where $\sin (\theta_p) = \left|1-\frac{2}{p}\right|$.
\end{thm}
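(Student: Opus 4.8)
The plan is to read off the sharp (R-)sectoriality angle of $L$ directly from the explicit Mehler kernel, and then to feed this into the Kalton--Weis functional calculus machinery. Replacing the real time $t$ by a complex parameter $z$ in \eqref{defnOU} produces a candidate family $e^{-zL}$ with explicit complex Gaussian kernel $M_z$. I would first show that $\{e^{-zL}\}$ is a bounded --- in fact contractive --- holomorphic semigroup on the open sector $\Sigma_{\nu_p}$ of half-angle $\nu_p := \pi/2 - \theta_p = \arccos\left|1-\frac{2}{p}\right|$ on $L^p(\mu)$, and that this sector is optimal. The bound $\|e^{-zL}\|_{L^p(\mu)\to L^p(\mu)}\le 1$ on $\overline{\Sigma_{\nu_p}}$ forces $L$ to be sectorial of angle $\pi/2-\nu_p=\theta_p$, which already identifies the sectoriality angle, while unboundedness for $|\arg z|>\nu_p$ gives the matching lower bound.

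The heart of the argument is this sharp complex-time estimate, which I expect to be the main obstacle. I would first reduce to dimension one: the Mehler kernel factorises over coordinates and $\mu$ is a product measure, so $e^{-zL}$ is the $d$-fold tensor power of its one-dimensional version, and since the $L^p$ operator norm is multiplicative under such tensor products, contractivity in one dimension is equivalent to contractivity in every dimension. In one dimension the operator sends exponential (Gaussian) test functions $f(x)=e^{\beta x}$ to Gaussians, so that the ratio $\|e^{-zL}f\|_p/\|f\|_p$ reduces to explicit one-dimensional Gaussian integrals; optimising over $\beta$ yields an algebraic inequality in $e^{-z}$ and $p$ whose solution set is precisely $\overline{\Sigma_{\nu_p}}$. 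The delicate points are the exact evaluation and optimisation, and the verification that these Gaussian functions are genuine extremisers, which is what turns the sufficient condition into a necessary one and hence delivers the optimal angle.

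To upgrade sectoriality to R-sectoriality of the same angle $\theta_p$ I would exploit the lattice structure of $L^p(\mu)$ and the positivity of the Mehler kernel. R-boundedness of a family of operators on $L^p$ is equivalent to an $\ell^2$-valued (square function) estimate, and since $|M_z(x,y)|$ is pointwise controlled by a positive Gaussian kernel of Mehler type, while positive operators extend to $L^p(\ell^2)$ with unchanged norm, the required square-function bounds for $\{e^{-zL}\}$ reduce to corresponding bounds for the underlying positive real-parameter semigroup. The one delicate point is the behaviour as $z\to 0$, where $e^{-zL}\to\mathrm{Id}$ and no single dominating operator exists; I would handle it through the standard reduction of R-sectoriality to R-boundedness of the holomorphic semigroup on proper subsectors $\Sigma_{\nu_p-\epsilon}$, together with a vector-valued maximal inequality for the positive semigroup.

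Finally I would invoke the Kalton--Weis theory. As $L^p(\mu)$ has property $(\alpha)$ for $1<p<\infty$, the optimal angle of a bounded $H^\infty$ calculus of a sectorial operator coincides with its R-sectoriality angle; since a bounded $H^\infty$ calculus for $L$ of some angle is already available from the general theory of symmetric diffusion semigroups, the sharp R-sectoriality angle $\theta_p$ established above immediately yields a bounded $H^\infty(\Sigma_\theta)$ calculus for every $\theta>\theta_p$, and the lower bound from the preceding paragraphs shows $\theta_p$ is optimal. This is exactly Theorem \ref{thmGoodHinfty}.
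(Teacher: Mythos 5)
Your overall skeleton --- identify the sharp sector on which $e^{-zL}$ extends analytically with good bounds, upgrade to R-boundedness on proper subsectors, then invoke the Kalton--Weis equivalence of the $H^\infty$ and R-sectoriality angles on $L^p$ --- is exactly the paper's strategy, and your final step is sound. But the two middle steps have genuine gaps. First, your route to boundedness on $\overline{\Sigma_{\nu_p}}$ asserts that Gaussian test functions $e^{\beta x}$ are extremisers for the complex Mehler operators: testing on such functions only yields \emph{necessary} conditions on $z$ (useful for the lower bound, which incidentally the paper does not reprove --- it cites Garc\'{i}a-Cuerva--Mauceri--Meda--Sj\"{o}gren--Torrea for optimality), whereas the contraction estimate for \emph{all} $f$ is Epperson's theorem, a substantial result whose proof is nothing like a routine optimisation; your proposal essentially assumes it. Moreover contractivity is far more than the theorem needs: uniform boundedness on subsectors suffices, and it falls out of an elementary kernel estimate.

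The decisive gap is your R-sectoriality step: the proposed reduction ``by pointwise domination'' to square-function or vector-valued maximal bounds for the \emph{real-time} positive semigroup cannot work, for a structural reason. Writing the kernel as in Lemma \ref{lemmaKernel}, with $u=\left|\frac{x+y}{2\sqrt{2}}\right|$ and $k=\left|\frac{x-y}{\sqrt{2}}\right|$, one has $|M_z(x,y)| = C_z\exp\left(-\Re(s_z)\,u^2 - \tfrac14\Re\left(\tfrac{1}{s_z}\right)k^2\right)\exp\left(\tfrac12(\phi(x)-\phi(y))\right)$, while the real-time kernel has rates $s_t$ and $\tfrac{1}{4s_t}$ with product exactly $\tfrac14$. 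A pointwise bound $|M_z|\le C\,M_t$ forces (let $u\to\infty$ with $k=0$, then $k\to\infty$ with $u=0$) both $\Re(s_z)\ge s_t$ and $\Re(1/s_z)\ge 1/s_t$, hence $\Re(s_z)\Re(1/s_z)=\cos^2(\arg s_z)\ge 1$ --- impossible for any non-real $z$. So for $z$ off the axis no real time $t$ and no constant dominate, and the dominating positive family is unavoidably one of strictly sub-critical Gaussian-type kernels whose R-boundedness is precisely the remaining content; appealing to a vector-valued maximal inequality for the Ornstein--Uhlenbeck semigroup both presupposes this and imports deep Gaussian harmonic analysis (Muckenhoupt, Sj\"{o}gren) that your sketch does not supply. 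The paper's resolution is the missing mechanism: conjugate by $U_pf = e^{-\phi/p}f$ to move to $L^p(\lambda)$, absorb the resulting weight via $\left(\tfrac12-\tfrac1p\right)(\phi(x)-\phi(y))\le M_p\,uk$ by completing the square --- positivity of the leftover rate $\Re(1/s_z)-M_p^2/\Re(s_z)$ is exactly the sharp-angle condition $\cos^2(\arg s_z)>M_p^2$ --- and then dominate by Gaussian \emph{convolutions}, whose R-boundedness is the Fefferman--Stein-type Proposition 8.2.3 of \cite{AnalysisInBanachSpacesV2}. Without some such device the critical angle $\theta_p$ never enters your R-boundedness argument at all.
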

See [5] for the theory of the $H^{\infty}$ functional calculus. That $L$ has a bounded $H^\infty$ functional calculus (of some angle $\theta<\pi$) follows from general results in the theory of the $H^\infty$ functional calculus (for example, Theorem 10.7.13 of \cite{AnalysisInBanachSpacesV2} states that any generator of an analytic semigroup on an $L^p$ space for $p\in (1, \infty)$ which is a positive contraction semigroup for real time has a bounded $H^\infty$ functional calculus of some angle less than $\frac{\pi}{2}$). The difficulty in Theorem \ref{thmGoodHinfty} is to prove the boundedness of the calculus with precisely the optimal angle $\theta_{p}$.

Theorem \ref{thmGoodHinfty} was originally proven by Garc\'{i}a-Cuerva, Mauceri, Meda, Sj\"{o}gren and Torrea in \cite{GMMST}, also proving that $\theta_p$ is optimal. They use Mauceri's abstract multiplier theorem to reduce the problem to precisely estimating $u\mapsto ||L^{iu}||$.  To do so, they express $L^{iu}$ as an integral of the semigroup, using a carefully chosen contour of integration. They then consider the kernels of operators corresponding to different parts of the contour, and decompose them into a local and global part. To treat the global parts they then use a range of subtle kernel estimates.

In \cite{CD}, Carbonaro and Dragi\v{c}evi\'{c} reproved and extended the result of Theorem \ref{thmGoodHinfty} to treat arbitrary generators of symmetric contraction semigroups on an $L^p$ space over a $\sigma$-finite measure space. Note that as they work on abstract $L^p$ spaces, their result gives dimension independent estimates working over $\mathbb{R}^d$. For their proof, they first reduce the problem to proving a bilinear embedding for the semigroup, with constants depending optimally on the angle $\theta_{p}$. They then use the Bellman function method, controlling the bilinear form by an optimally (depending on $p$) chosen function. This function turns out to be a known Bellman function introduced by Nazarov and Treil, but just proving that it has the right properties is a highly non-trivial task.

In contrast, the proof presented in this paper is based on the well-known result that in $L^p$ spaces, the optimal angle of the $H^\infty$ functional calculus of an operator is equal to its optimal angle of R-sectoriality (see \cite{AnalysisInBanachSpacesV2} for the theory of R-sectoriality, and its Theorem 10.7.13 for a proof of the stated result). Our proof that the latter is equal to $\theta_p$ uses Theorem 10.3.3 of \cite{AnalysisInBanachSpacesV2}, which states an equivalence between an operator $A$ being R-sectorial of angle $\theta<\frac{\pi}{2}$ and $-A$ being the generator of an analytic semigroup of angle $\frac{\pi}{2}-\theta$ which is R-bounded on each smaller sector. To deduce R-boundedness of the Ornstein-Uhlenbeck semigroup on such sectors, a standard result on R-boundedness of integral operators with radially decaying kernels is employed (Proposition 8.2.3 of \cite{AnalysisInBanachSpacesV2}). This key step only requires simple manipulations of the kernel for the Ornstein-Uhlenbeck semigroup. It is based on an approach designed by van Neerven and Portal in \cite{vNPWeylGaussian}, where they recover classical results about the Ornstein-Uhlenbeck semigroup in a very direct manner. Their idea is to separate algebraic difficulties from analytic difficulties by considering a non-commutative functional calculus of the Gaussian position and momentum operators (the Weyl calculus). Using this calculus, one sees how to modify the kernels in a way that makes their analysis straightforward. A posteriori, the use of the Weyl calculus can be removed, and the proof can be read as a simple computation exploiting the change of time parameter $t \mapsto \frac{1-e^{-t}}{1+e^{-t}}$ (which has been used by many authors before).

Throughout the paper, we make use of the following notation. The function $\phi:\mathbb{R}^d \to \mathbb{R}$ will have action $x \mapsto \frac{x^2}{2}$. The standard Gaussian measure $\mu$ on $\mathbb{R}^d$ will thus be written with density $d\mu = (2\pi)^{-\frac{d}{2}} e^{-\phi(x)}dx$. The Lebesgue measure on $\mathbb{R}^d$ will be denoted by $\lambda$. As we only ever work over $\mathbb{R}^d$ with Borel $\sigma$-algebra, the measurable space over which we consider Lebesgue spaces will be dropped from the notation. For $\theta \in [0, \pi]$, we will write $\Sigma_\theta$ for the open sector $\{z \in \mathbb{C}\backslash\{0\}; |\arg(z)|<\theta\}$.

\section{R-Sectoriality of L}

To simplify things, for the rest of the article we will assume that $p \in (1, \infty)$ is fixed. Similarly, all concepts of boundedness and R-boundedness will be on either $L^p(\mu)$ or $L^p(\lambda)$ without explicit mention of the space, the measure being clear from context.

\begin{lemma}\label{lemmaKernel}
$M_t$ has the alternate form for $t>0$ and $x, y \in \mathbb{R}^d$,
\[M_t(x,y) =  \frac{1}{(2\pi)^\frac{d}{2}}\left(\frac{1}{1-e^{-2t}}\right)^\frac{d}{2}\exp\left(-s_t \left(\frac{x+y}{2\sqrt{2}}\right)^2 - \frac{1}{4s_t}\left(\frac{x-y}{\sqrt{2}}\right)^2   \right)\exp\left(\frac{1}{2}\left(\phi(x) - \phi(y)\right)\right),\]
where $s_t = \frac{1-e^{-t}}{1+e^{-t}}$.
\end{lemma}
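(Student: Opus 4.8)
The plan is to treat the statement as a purely algebraic identity between exponents. First I would observe that the two expressions for $M_t$ carry the identical prefactor $\frac{1}{(2\pi)^{d/2}}\left(\frac{1}{1-e^{-2t}}\right)^{d/2}$, so it suffices to prove that the two exponents agree. To streamline the bookkeeping I would set $a = e^{-t} \in (0,1)$, so that $1 - e^{-2t} = (1-a)(1+a)$ and $s_t = \frac{1-a}{1+a}$, whence $s_t^{-1} = \frac{1+a}{1-a}$.

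Since both exponents are quadratic forms in $(x,y)$ depending only on the three rotation-invariant scalars $|x|^2$, $|y|^2$ and $x\cdot y$ (recall $\phi(x) = |x|^2/2$), I would reduce the problem to matching the coefficients of these three quantities. Expanding the original exponent via $|e^{-t}x - y|^2 = a^2|x|^2 - 2a\,x\cdot y + |y|^2$ yields coefficients $-\frac{a^2}{2(1-a^2)}$, $-\frac{1}{2(1-a^2)}$, and $+\frac{a}{1-a^2}$ for $|x|^2$, $|y|^2$ and $x\cdot y$ respectively. For the new exponent I would use $\left(\frac{x+y}{2\sqrt2}\right)^2 = \frac{|x|^2 + 2x\cdot y + |y|^2}{8}$, $\left(\frac{x-y}{\sqrt2}\right)^2 = \frac{|x|^2 - 2x\cdot y + |y|^2}{2}$, together with $\frac12(\phi(x)-\phi(y)) = \frac14(|x|^2 - |y|^2)$.

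The whole computation then rests on the two elementary identities $s_t + s_t^{-1} = \frac{2(1+a^2)}{1-a^2}$ and $s_t^{-1} - s_t = \frac{4a}{1-a^2}$, obtained by putting the fractions over the common denominator $1-a^2$. Substituting these, the coefficient of $|x|^2$ in the new exponent becomes $-\frac18(s_t + s_t^{-1}) + \frac14 = -\frac{1+a^2}{4(1-a^2)} + \frac14 = -\frac{a^2}{2(1-a^2)}$, matching the original; the $|y|^2$ coefficient is $-\frac18(s_t+s_t^{-1}) - \frac14 = -\frac{1}{2(1-a^2)}$, again matching; and the cross term is $\frac14(s_t^{-1} - s_t) = \frac{a}{1-a^2}$, as required.

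I do not expect a genuine obstacle, since the identity is entirely mechanical once the reduction to three coefficients is made. The one point deserving a little care is that the correction factor $\exp(\frac12(\phi(x)-\phi(y)))$ contributes with opposite signs to the $|x|^2$ and $|y|^2$ coefficients, and it is precisely this asymmetry that reconciles the \emph{symmetric} combination $s_t + s_t^{-1}$ appearing in the rewritten form with the manifestly asymmetric coefficients $a^2$ versus $1$ of the original Mehler kernel.
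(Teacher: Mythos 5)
Your proposal is correct and is essentially the paper's own argument: both reduce the lemma to the purely algebraic identity between the two exponents, with your key identities $s_t + s_t^{-1} = \frac{2(1+a^2)}{1-a^2}$ and $s_t^{-1} - s_t = \frac{4a}{1-a^2}$ appearing in the paper as the expansion $\left((1+e^{-t})^2 \pm (1-e^{-t})^2\right)$. The only difference is organizational --- you match coefficients of $|x|^2$, $|y|^2$, $x\cdot y$ against the target, while the paper rewrites the original exponent in a forward chain of equalities --- and all your coefficient computations check out.
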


\begin{proof}
We will rearrange the exponent from Equation (\ref{defnOU}) and show that it is equal to the exponent given above for all $x, y \in \mathbb{R}^d$ and $t>0$, as that is all that has changed between the two representations. For each $t>0, x, y \in \mathbb{R}^d$ we have

\begin{align*}
 -\frac{1}{2}\frac{|e^{-t}x - y|^2}{(1-e^{-2t})} &= -\frac{1}{2}\frac{|e^{-t}x - y|^2}{(1-e^{-2t})} - \frac{1}{4}(x^2-y^2)  + \frac{1}{4}(x^2-y^2) \\
 &= -\frac{1}{2}\frac{|e^{-t}x - y|^2}{(1-e^{-2t})} - \frac{1}{4}(x^2-y^2)  + \frac{1}{2}\left(\phi(x)-\phi(y)\right) \\
&= -\frac{1}{2(1-e^{-2t})}\left(|e^{-t}x - y|^2 + \frac{(1-e^{-2t})}{2}(x^2-y^2)\right)  + \frac{1}{2}\left(\phi(x)-\phi(y)\right) \\
&= -\frac{1}{2(1-e^{-2t})}\left(e^{-2t}x^2 - 2e^{-t}xy + y^2 + \frac{(1-e^{-2t})}{2}(x^2-y^2)\right)  + \frac{1}{2}\left(\phi(x)-\phi(y)\right) \\
&= -\frac{1}{2(1-e^{-2t})}\left(\frac{1}{2}\left(1+e^{-2t}\right)x^2 - 2e^{-t}xy + \frac{1}{2}\left(1+e^{-2t}\right)y^2\right)  + \frac{1}{2}\left(\phi(x)-\phi(y)\right) \\
&= -\frac{1}{8(1-e^{-2t})}\\
&\times \left(\left((1+e^{-t})^2 + (1-e^{-t})^2\right)x^2 + 2\left((1-e^{-t})^2 - (1+e^{-t})^2\right)xy + \left((1+e^{-t})^2 + (1-e^{-t})^2\right)y^2\right)  \\
&+ \frac{1}{2}\left(\phi(x)-\phi(y)\right) \\
&= -\frac{1}{8(1-e^{-2t})}\left( (1-e^{-t})^2(x+y)^2 + (1+e^{-t})^2(x-y)^2\right)  + \frac{1}{2}\left(\phi(x)-\phi(y)\right) \\
&= -\left(\frac{1-e^{-t}}{1+e^{-t}}\left(\frac{x+y}{2\sqrt{2}}\right)^2 + \frac{1}{4}\frac{1+e^{-t}}{1-e^{-t}}\left(\frac{x-y}{\sqrt{2}}\right)^2\right)  + \frac{1}{2}\left(\phi(x)-\phi(y)\right) \\
&= -\left(s_t\left(\frac{x+y}{2\sqrt{2}}\right)^2 + \frac{1}{4s_t}\left(\frac{x-y}{\sqrt{2}}\right)^2\right)  + \frac{1}{2}\left(\phi(x)-\phi(y)\right). \\
\end{align*}
\end{proof}

The next definition, albeit a simple one, forms the backbone of the rest of our arguments.

\begin{defn}\label{defnUp}
Define the (multiple of an) isometry $U_p: L^p(\mu) \to L^p(\lambda)$ by
\[U_pf(x) = f(x) \exp\left(-\frac{\phi(x)}{p}\right), \text{ for } x \in \mathbb{R}^d.\]
\end{defn}

As explained previously, we need only show that the Ornstein-Uhlenbeck semigroup has an analytic extension to a sector of the correct angle, and that it is R-bounded on each smaller sector. We will in fact show a lot more with no more effort. We shall work with the reparametrisation of the kernel of the semigroup in terms of $s_t$ from Lemma \ref{lemmaKernel}. The function $t \mapsto s_t$ is analytic and can clearly be analytically extended to the domain $\mathbb{C}\backslash i\pi(2\mathbb{Z}+1)$. We will consider the analytic extension $z \mapsto s_z$ on domains of the form
\begin{equation}\label{eqnE}
E := \left\{z \in \mathbb{C}; s_z \in \Sigma_{\frac{\pi}{2}-\theta_p}; z \not\in i\pi\mathbb{Z}\right\}
\end{equation}
where $\sin (\theta_p) = M_p := \left|1-\frac{2}{p}\right|$. We will show the Ornstein-Uhlenbeck semigroup extends to an analytic semigroup on the domain $E$. Moreover, we will simultaneously show that the Ornstein-Uhlenbeck semigroup is R-bounded on sets of the form
\begin{equation}\label{eqnEdash}
E_{\epsilon, \delta} = \left\{z \in \mathbb{C}; |\Re(s_z)|^2/|s_z|^2 = \cos^2(\arg(s_z))> M_p^2 + \epsilon; \text{dist}\left(z, i\pi(2\mathbb{Z}+1)\right)>\delta ; z \not\in 2i\pi\mathbb{Z}\right\}
\end{equation}
for all $\epsilon, \delta>0$. Note that, in terms of the reparametrisation $s_z$, these sets are open sectors of angle $\frac{\pi}{2}-\theta_p$ or less, with certain points removed. We claim that $\Sigma_{\frac{\pi}{2}-\theta_p} \subset E$, and that for all $\epsilon'>0$ there exists $\epsilon, \delta>0$ such that $\Sigma_{\frac{\pi}{2}-\theta_p-\epsilon'} \subset E_{\epsilon, \delta}$ (see \cite{vNPWeylGaussian} for details of this calculation). These results combined will imply that the maximal domain of analyticity of the Ornstein-Uhlenbeck semigroup contains the sector $\Sigma_{\frac{\pi}{2}-\theta_p}$, and that it is R-bounded on each smaller sector, which combined with the procedure outlined in the introduction will show at least that the Ornstein-Uhlenbeck operator is R-sectorial of the desired angle.

\begin{thm}\label{thmRSectorialGoodAngle}
For $p \in (1, \infty)$, the Ornstein-Uhlenbeck operator on $L^p(\mu)$ is R-sectorial of angle $\theta_p$, where $\sin (\theta_p) = M_p := \left|1-\frac{2}{p}\right|$.
\end{thm}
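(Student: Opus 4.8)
The plan is to apply Theorem~10.3.3 of \cite{AnalysisInBanachSpacesV2}: since the generator of the semigroup is $-L$, it suffices to produce an analytic extension of $(T_z)$ to the sector $\Sigma_{\frac{\pi}{2}-\theta_p}$ that is R-bounded on each subsector $\Sigma_{\frac{\pi}{2}-\theta_p-\epsilon'}$. Because $U_p$ is a scalar multiple of an isometry from $L^p(\mu)$ onto $L^p(\lambda)$, conjugation by $U_p$ preserves both boundedness and R-boundedness with identical constants, so I would transfer the entire problem to $L^p(\lambda)$ and work with the conjugated operators $U_p T_z U_p^{-1}$ there. The point of this conjugation is to eliminate the Gaussian weight $e^{-\phi}$, which is the source of all the analytic difficulty on $L^p(\mu)$.

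Next I would compute the conjugated kernel. By Definition~\ref{defnUp}, $U_p T_z U_p^{-1}$ is the integral operator on $L^p(\lambda)$ with kernel $K_z(x,y) = M_z(x,y)\exp(\frac{1}{p}(\phi(y)-\phi(x)))$, and the factor $\exp(\frac12(\phi(x)-\phi(y)))$ already isolated in Lemma~\ref{lemmaKernel} combines with this to leave $\exp((\frac12-\frac1p)(\phi(x)-\phi(y)))$. Setting $a = \frac{x+y}{2\sqrt2}$ and $b = \frac{x-y}{\sqrt2}$, so that $\phi(x)-\phi(y) = \tfrac12(|x|^2-|y|^2) = 2\,a\cdot b$, the exponent of $K_z$ becomes
\[ -s_z\,a^2 - \frac{1}{4s_z}\,b^2 + \Bigl(1-\frac{2}{p}\Bigr)\,a\cdot b, \]
a clean quadratic form in the two real vector variables $a$ and $b$.

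I would then take moduli, which replaces $s_z$ by $\Re(s_z)$ and $\frac{1}{s_z}$ by $\Re(\frac{1}{s_z}) = \frac{\Re(s_z)}{|s_z|^2}$, and complete the square in the $a$-variable. Since $\Re(s_z)>0$ on the region of interest, the resulting $a$-term is nonpositive and may simply be discarded, leaving a pure Gaussian in $b$ whose coefficient is $\frac14\cdot\frac{\Re(s_z)^2-M_p^2|s_z|^2}{\Re(s_z)\,|s_z|^2}$. This is strictly positive precisely when $\Re(s_z)^2/|s_z|^2 = \cos^2(\arg s_z)>M_p^2$, exactly the condition cutting out $E$ and $E_{\epsilon,\delta}$. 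The outcome is a pointwise domination $|K_z(x,y)|\le \kappa_z(x-y)$ in which $\kappa_z$ is a radially decreasing Gaussian. Using the identity $1-e^{-2z} = \frac{4s_z}{(1+s_z)^2}$ to rewrite the prefactor, together with the defining inequalities of $E_{\epsilon,\delta}$ (the angle bound giving $\Re(s_z)^2-M_p^2|s_z|^2>\epsilon|s_z|^2$, and the distance-to-poles bound keeping $s_z$ bounded), I would check that $\sup_{z\in E_{\epsilon,\delta}}\|\kappa_z\|_{L^1}<\infty$. Proposition~8.2.3 of \cite{AnalysisInBanachSpacesV2} then yields R-boundedness of $\{U_p T_z U_p^{-1}: z\in E_{\epsilon,\delta}\}$, hence of $\{T_z: z\in E_{\epsilon,\delta}\}$.

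For analyticity, $z\mapsto K_z(x,y)$ is holomorphic and the kernel bounds are locally uniform on $E$, so $z\mapsto U_p T_z U_p^{-1}$ is a holomorphic $B(L^p(\lambda))$-valued map that agrees with the semigroup for real $z$; this provides the analytic extension, with the semigroup law propagating by analytic continuation. Feeding in the claimed inclusions $\Sigma_{\frac{\pi}{2}-\theta_p}\subset E$ and $\Sigma_{\frac{\pi}{2}-\theta_p-\epsilon'}\subset E_{\epsilon,\delta}$ then verifies both hypotheses of Theorem~10.3.3 and gives R-sectoriality of angle $\theta_p$. I expect the main obstacle to be the estimate in the third step: recognising that the cross term $(1-\frac2p)\,a\cdot b$ in the exponent is exactly what forces the sharp threshold $\cos^2(\arg s_z)>M_p^2$, and then controlling the prefactor and the Gaussian decay rate simultaneously so that the dominating kernels have uniformly bounded $L^1$ norm over the full family $E_{\epsilon,\delta}$.
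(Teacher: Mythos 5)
Your proposal is correct and follows essentially the same route as the paper's proof: conjugation by $U_p$ to $L^p(\lambda)$, the $s_z$-reparametrised kernel, taking moduli and completing the square to discard the $a$-term and obtain a radially decreasing Gaussian dominating kernel with $L^1$ norm uniformly bounded on $E_{\epsilon,\delta}$, Proposition 8.2.3 for R-boundedness, domination and isometric equivalence to transfer back, and Theorem 10.3.3 to conclude. The only cosmetic difference is your use of the identity $1-e^{-2z}=\frac{4s_z}{(1+s_z)^2}$ to control the prefactor, where the paper instead factors $|1-e^{-2z}|=|1-e^{-z}|\,|1+e^{-z}|$ and cancels against $|s_z|$; both yield the same uniform bound away from the poles $i\pi(2\mathbb{Z}+1)$.
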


\begin{proof}

To determine (R-)boundedness of the analytic extension of $\exp(-tL)$ on $L^p(\mu)$ we conjugate by the (multiple of an) isometry $U_p: L^p(\mu) \to L^p(\lambda)$, and work with $U_p \exp(-tL) U_p^{-1}$ on $L^p(\lambda)$. As (multiples of) isometries preserve (R-)boundedness, $\exp(-tL)$ has an analytic extension to $z \in \mathbb{C}$ if and only if $U_p \exp(-tL) U_p^{-1}$ does, and both families of operators will be R-bounded on the same subdomains of the domain of analyticity. Using the integral kernel of Lemma \ref{lemmaKernel} and the explicit form of the isometry $U_p$ from Definition \ref{defnUp}, we find the integral representation for $f \in L^p(\lambda)$:
\[U_p \exp(-tL) U_p^{-1}f = \left( x \mapsto \int_{\mathbb{R}^d} k_t(x,y)f(y)dy\right),\]
with
\[k_t(x,y) = \frac{1}{(2\pi)^\frac{d}{2}}\left(\frac{1}{1-e^{-2t}}\right)^\frac{d}{2}\exp\left(-s_t \left(\frac{x+y}{2\sqrt{2}}\right)^2 - \frac{1}{4s_t}\left(\frac{x-y}{\sqrt{2}}\right)^2   \right)\exp\left(\left(\frac{1}{2}-\frac{1}{p}\right)\left(\phi(x) - \phi(y)\right)\right)\]
and $s_t = \frac{1-e^{-t}}{1+e^{-t}}$.
If $U_p \exp(-tL) U_p^{-1}$ were to have an analytic extension $U_p \exp(-zL) U_p^{-1}$ for $z$ in some domain containing $[0,\infty)$, uniqueness theory of analytic functions implies that $U_p \exp(-zL) U_p^{-1}$ would also have an integral representation, with kernel
\[k_z(x,y) = \frac{1}{(2\pi)^\frac{d}{2}}\left(\frac{1}{1-e^{-2z}}\right)^\frac{d}{2}\exp\left(-s_z \left(\frac{x+y}{2\sqrt{2}}\right)^2 - \frac{1}{4s_z}\left(\frac{x-y}{\sqrt{2}}\right)^2   \right)\exp\left(\left(\frac{1}{2}-\frac{1}{p}\right)\left(\phi(x) - \phi(y)\right)\right),\]
where $s_z = \frac{1-e^{-z}}{1+e^{-z}}$. To understand why this must be the case, we can act $U_p \exp(-tL) U_p^{-1}$ on some $f \in L^p(\mu)$, and then pair with some $g \in \left(L^p(\mu)\right)^* = L^{p'}(\mu)$ to obtain a function $\mathbb{R}^+ \to \mathbb{C}$, $t \mapsto \left\langle U_p \exp(-tL) U_p^{-1}f, g\right\rangle$. This function will have an analytic extension to the set of $z$ for which the operator with integral kernel $k_z(x,y)$ is bounded on $L^p(\mu)$, and standard uniqueness results for $\mathbb{C}$-valued analytic functions implies that the analytic extension will be given by the operator with integral kernel $k_z(x,y)$ applied to $f$ and paired with $g$. Thus $U_p \exp(-tL) U_p^{-1}$ would have as weak-analytic extension the operator with integral kernel $k_z(x,y)$, to the set of $z$ for which this is  bounded on $L^p(\mu)$. By the equivalence of strong-analytic and weak-analytic Banach space valued functions (see, for example, Chapter VII \S 3, Exercise 4 of \cite{Conway}), the claim follows. (There is a slight notational issue here, in that the definition of an analytic semigroup on a Banach space $X$ is only ever analytic in the strong operator topology, such that the functions $z \mapsto \exp(-zL)f$ are $X$-valued norm-analytic functions, for each $f \in X$).

We will now work on bounding $k_z(x,y)$. We start by assuming that $z \in E$ (see Equation (\ref{eqnE})). Note that this implies $Re(s_z) > 0$ and $1-e^{-2z} \neq 0$. Then we have:
\begin{align*}
|k_z(x,y)| &\leq \frac{1}{(2\pi)^\frac{d}{2}}\left|\frac{1}{1-e^{-2z}}\right|^\frac{d}{2}\exp\left(-\Re (s_z) \left(\frac{x+y}{2\sqrt{2}}\right)^2 - \frac{1}{4} \Re\left(\frac{1}{s_z}\right)\left(\frac{x-y}{\sqrt{2}}\right)^2   \right)\exp\left(\left(\frac{1}{2}-\frac{1}{p}\right)\left(\phi(x) - \phi(y)\right)\right) \\
&\leq \frac{1}{(2\pi)^\frac{d}{2}}\left|\frac{1}{1-e^{-2z}}\right|^\frac{d}{2}\exp\left(-\Re (s_z) \left(\frac{x+y}{2\sqrt{2}}\right)^2 + M_p \frac{1}{4}\left|x^2 - y^2\right| - \frac{1}{4} \Re\left(\frac{1}{s_z}\right)\left(\frac{x-y}{\sqrt{2}}\right)^2 \right) \\
&= \frac{1}{(2\pi)^\frac{d}{2}}\left|\frac{1}{1-e^{-2z}}\right|^\frac{d}{2}\exp\left(-\Re (s_z) \left(\frac{x+y}{2\sqrt{2}}\right)^2 + M_p \left|\frac{x+y}{2\sqrt{2}}\right| \left|\frac{x-y}{\sqrt{2}}\right| - \frac{1}{4} \Re\left(\frac{1}{s_z}\right)\left(\frac{x-y}{\sqrt{2}}\right)^2 \right) \\
\end{align*}
For notational simplicity, let $u = \left|\frac{x+y}{2\sqrt{2}}\right|$ and $k = \left|\frac{x-y}{\sqrt{2}}\right|$. Then rewriting in terms of $u$ and $k$ and completing the square in $u$ gives
\begin{align*}
|k_z(x,y)| &\leq \frac{1}{(2\pi)^\frac{d}{2}}\left|\frac{1}{1-e^{-2z}}\right|^\frac{d}{2}\exp\left(-\Re (s_z) u^2 + M_p uk - \frac{1}{4} \Re\left(\frac{1}{s_z}\right)k^2 \right) \\
&= \frac{1}{(2\pi)^\frac{d}{2}}\left|\frac{1}{1-e^{-2z}}\right|^\frac{d}{2}\exp\left(-\left(\sqrt{\Re (s_z)}u - \frac{M_p}{2\sqrt{\Re (s_z)}} k\right)^2 - \frac{1}{4} \left(\Re\left(\frac{1}{s_z}\right) - \frac{M_p^2}{\Re (s_z)}\right)k^2 \right). \\
\end{align*}
So
\begin{align*}
|k_z(x,y)| &\leq \frac{1}{(2\pi)^\frac{d}{2}}\left|\frac{1}{1-e^{-2z}}\right|^\frac{d}{2}\exp\left(- \frac{1}{4} \left(\Re\left(\frac{1}{s_z}\right) - \frac{M_p^2}{\Re (s_z)}\right)k^2 \right) \\
&= \frac{1}{(2\pi)^\frac{d}{2}}\left|\frac{1}{1-e^{-2z}}\right|^\frac{d}{2}\exp\left(- \frac{1}{4} \left(\Re\left(\frac{1}{s_z}\right) - \frac{M_p^2}{\Re (s_z)}\right)\left(\frac{x-y}{\sqrt{2}}\right)^2 \right). \\
&= \frac{1}{(2\pi)^\frac{d}{2}}\left|\frac{1}{1-e^{-2z}}\right|^\frac{d}{2}\exp\left(- \frac{1}{8} \left(\Re\left(\frac{1}{s_z}\right) - \frac{M_p^2}{\Re (s_z)}\right)\left(x-y\right)^2 \right). \\
\end{align*}
For each $z \in E$, let $g_z:\mathbb{R}^d \to \mathbb{R}$ be the function
\[x \mapsto \frac{1}{(2\pi)^\frac{d}{2}}\left|\frac{1}{1-e^{-2z}}\right|^\frac{d}{2}\exp\left(- \frac{1}{8} \left(\Re\left(\frac{1}{s_z}\right) - \frac{M_p^2}{\Re (s_z)}\right)x^2 \right).\]
Then we have that for all $z \in E$, $f \in L^p(\lambda)$ and a.e. $x \in \mathbb{R}^d$
\[\left| \left(U_p \exp(-zL) U_p^{-1}f\right)(x)\right| \leq (g_z * |f|)(x).\] 
Therefore, provided the family of convolution operators $f \in L^p(\lambda) \mapsto g_z*f$ is (R-)bounded for $z$ in (a subset of) $E$, we will have proven, by domination and isometry, that $\exp(-zL)$ is (R-)bounded on (the same subset of) $E$ (to see that domination implies R-boundedness, see Proposition 8.1.10 of \cite{AnalysisInBanachSpacesV2}, and note that in the proof of said proposition the fixed positive operator can be replaced by an R-bounded family of positive operators). For $z \in E$, we find
\[ \Re\left(\frac{1}{s_z}\right) - \frac{M_p^2}{\Re (s_z)} = \frac{\Re(s_z)}{|s_z|^2} - \frac{M_p^2}{\Re (s_z)} 
= \frac{1}{\Re (s_z)}\left(\frac{|\Re(s_z)|^2}{|s_z|^2} - M_p^2\right) 
>0,\]
since $\Re (s_z)>0$ and $|\Re(s_z)|^2/|s_z|^2 = \cos^2(\arg(s_z)) > M_p^2$ by definition of $E$ (since $\cos\left(\frac{\pi}{2} - \theta_p\right) = \sin\left(\theta_p\right) = M_p$). So for $z \in E$, $g_z \in L^1(\lambda)$ and so by Young's convolution inequality, convolution by $g_z$ is a bounded operator on $L^p(\lambda)$ with operator norm at most $||g_z||_{L^1(\lambda)}$.
Now we will focus on sets of the form $E_{\epsilon, \delta}$ for some fixed $\epsilon, \delta >0$ (see Equation (\ref{eqnEdash})). We will show that
 \begin{equation*}
 \sup_{z \in E_{\epsilon, \delta}} \int_{\mathbb{R}^d} \sup_{|y|>|x|} |g_z(y)| dx < \infty,
 \end{equation*}
from which we can apply Proposition 8.2.3 of \cite{AnalysisInBanachSpacesV2} to find that the family of convolution operators $\{g_z*\}_{z \in E_{\epsilon, \delta}}$ is R-bounded on $L^p(\lambda)$. Noting that each $g_z$ is radially decaying and positive, the quantity to bound is
\begin{align*}
\sup_{z \in E_{\epsilon, \delta}} \int_{\mathbb{R}^d} \sup_{|y|>|x|} |g_z(y)| dx &= \sup_{z \in E_{\epsilon, \delta}} \int_{\mathbb{R}^d} g_z(x) dx \\
&= \sup_{z \in E_{\epsilon, \delta}} \int_{\mathbb{R}^d} \frac{1}{(2\pi)^\frac{d}{2}}\left|\frac{1}{1-e^{-2z}}\right|^\frac{d}{2}\exp\left(- \frac{1}{8} \left(\Re\left(\frac{1}{s_z}\right) - \frac{M_p^2}{\Re (s_z)}\right)x^2 \right) dx \\
&= \sup_{z \in E_{\epsilon, \delta}}  \frac{1}{(2)^\frac{d}{2}}\left|\frac{1}{1-e^{-2z}}\right|^\frac{d}{2} \left(\frac{1}{8} \left(\Re\left(\frac{1}{s_z}\right) - \frac{M_p^2}{\Re (s_z)}\right)\right)^{-\frac{d}{2}} \\
&\leq \sup_{z \in E_{\epsilon, \delta}}  2^d \left|\frac{1}{1-e^{-2z}}\right|^\frac{d}{2} \left(\frac{\epsilon}{\Re(s_z)}\right)^{-\frac{d}{2}} \\
&= \sup_{z \in E_{\epsilon, \delta}}  \epsilon^{-\frac{d}{2}}2^d \left(\left|\frac{\Re(s_z)}{1-e^{-2z}}\right| \right)^{\frac{d}{2}} \\
&\leq \sup_{z \in E_{\epsilon, \delta}}  \epsilon^{-\frac{d}{2}}2^d \left(\frac{|s_z|}{|1-e^{-z}||1+e^{-z}|} \right)^{\frac{d}{2}} \\
&= \sup_{z \in E_{\epsilon, \delta}}  \epsilon^{-\frac{d}{2}}2^d \left(\frac{\left|\frac{1-e^{-z}}{1+e^{-z}}\right|}{|1-e^{-z}||1+e^{-z}|} \right)^{\frac{d}{2}} \\
&= \sup_{z \in E_{\epsilon, \delta}}  \epsilon^{-\frac{d}{2}}2^d \left(\frac{1}{|1+e^{-z}|} \right)^d \\
&<\infty
\end{align*}
since $z$ is bounded away from $(2\mathbb{Z}+1)i\pi$. So the family of convolution operators $\{g_z*\}_{z \in E_{\epsilon, \delta}}$ is R-bounded.
By pointwise domination, $U_p \exp(-zL) U_p^{-1}$ is bounded for $z \in E$, and is R-bounded on subsets $E_{\epsilon, \delta} \subset E$ of the form (\ref{eqnEdash}). Hence by isometric equivalence, $\exp(-zL)$ shares the same properties. Hence the claim follows from the discussion preceding this proof.
\end{proof}


\bibliography{mybib}
\bibliographystyle{plain}

\end{document}